\newcommand{\BR}{\mathbb{{R}}}
\newcommand{\T}{\mathbb{{T}}}
\newcommand{\N}{\mathbb{{N}}}
\newcommand{\D}{\mathbb{{D}}}
\newcommand{\cP}{\mathcal{{P}}}
\def\le {\leqslant}
\def\ge {\geqslant}
\providecommand{\U}[1]{\protect\rule{.1in}{.1in}}
\theoremstyle{plain}
\newtheorem{theorem}{Theorem}[section]
\newtheorem*{problema}{Problem A}
\newtheorem*{problemb}{Problem B}
\newtheorem*{problemc}{Problem C}
\newtheorem*{problemd}{Problem D}
\newtheorem{lemma}[theorem]{Lemma}
\newtheorem{remark}[theorem]{Remark}
\newtheorem{Definition}{Definition}
\numberwithin{equation}{section}
\def\C {\mathbb{C}}
\def\restr#1{\,\vrule\,\lower1ex\hbox{$#1$}}
\def\ve{\varepsilon}
\def\l{\lambda}
\def\o{\omega}
\def\t{\theta}
\def\z{\zeta}
\begin{document}

\title[Sharp Remez inequality]
{Sharp Remez inequality
}

\author{S. Tikhonov}
\address{S. Tikhonov
\\
Centre de Recerca Matem\`{a}tica\\
Campus de Bellaterra, Edifici C
08193 Bellaterra (Barcelona), Spain\\
ICREA, Pg. Llu\'{i}s Companys 23, 08010 Barcelona, Spain\\
 and Universitat Aut\`{o}noma de Barcelona 08193 Bellaterra (Barcelona), Spain
}
\email{stikhonov@crm.cat}

\author[P.\ Yuditskii]{P. Yuditskii}

\address{P. Yuditskii \\
Abteilung f\"ur Dynamische Systeme und Approximationstheorie, Johannes Kepler Universit\"at Linz, A-4040 Linz, Austria}

\email{Petro.Yudytskiy@jku.at}

\date{September 17, 2018}
\subjclass[2000]{Primary 41A17, 41A44; Secondary 30C35, 41A50}
\keywords{Sharp Remez inequality, Trigonometric polynomials, Comb domains}

\begin{abstract}
Let an {algebraic} polynomial $P_n(\zeta)$  of degree $n$ be such that
$|P_n(\zeta)|\le 1$ for  $\zeta\in E\subset\T$ and $|E|\ge 2\pi -s$. We prove the sharp Remez inequality
$$
\sup_{\zeta\in\T}|P_n(\zeta)|\le\mathfrak{T}_{n}\left(\sec \frac{s} 4\right),
$$
where $\mathfrak{T}_{n}$ is the Chebyshev polynomial of degree $n$.
The equality holds if and only if
$$
P_n(e^{iz})=e^{i(nz/2+c_1)}\mathfrak{T}_n\left(\sec\frac s 4\cos \frac {z-c_0} 2\right), \quad c_0,c_1\in\BR.
$$
This gives the solution of the long-standing problem on the sharp constant in the Remez  inequality for trigonometric polynomials.

\end{abstract}
\maketitle

\vskip 0.6cm

\section{Introduction}
Let
$\mathfrak{T}_n(x)$ be the Chebyshev polynomial of degree $n$, i.e.,
$$\mathfrak{T}_n(x)=\frac{1}{2}\Big(\big(x+\sqrt{x^2-1}\big)^n+\big(x-\sqrt{x^2-1}\big)^n\Big)$$ for every $x\in \mathbb{R}$
and
 $|B|$ denote the  Lebesgue measure of a measurable set $B$.

The Remez inequality \cite{re}  for algebraic polynomials $P_n$ asserts that
$$
\max_{x\in[-1,1]} |P_n(x)|\le \mathfrak{T}_n\left(\frac{2+s}{2-s}\right),$$
for every $P_n$ satisfying
$$
\Big|\{x:[-1,1]: |P_n(x)|\le 1\}\Big|\ge 2-s, \qquad 0<s<2.$$
Moreover, the
equality holds if and only if
$$P_n(x)=
\pm
\mathfrak{T}_n\left(\frac{\pm 2x+s}{2-s}\right).
$$
The Remez inequality 
 plays an important role in many problems in approximation theory, harmonic and functional analysis
  (see, for example, \cite{bor, brud, yom, totik} and references therein).

A similar question for trigonometric polynomials is
 a widely studied and open problem. In more detail,
 consider the space of  (complex) trigonometric polynomials of degree at most $n\in \mathbb{N}$, i.e.,
\begin{eqnarray*}
\frak N_n=\Big\{Q_n\,:\quad Q_n(x)=\sum_{|k|\le n} c_ke^{ikx},\quad
c_k\in \mathbb{C}, \quad x\in [0,2\pi)
\Big\}.
\end{eqnarray*}
\begin{problema}
 How large can $\|Q_n\|_{L_\infty([0,2\pi))}$ be if
\begin{equation}\label{remez-33}
\left|\Big\{x\in [0,2\pi): |Q_n(x)|\le 1\Big\}\right|\ge 2\pi- s
\end{equation}
holds
for some $0<s< 2\pi$?
\end{problema}
In other words, we study the
best constant in the Remez inequality
\begin{equation}\label{rem-or}
\|Q_n\|_{C[0,2\pi)}\le C(n,s)
\|Q_n\|_{C([0,2\pi) \setminus B)}, \qquad {Q}_n\in \frak N_n,\qquad |B|=s.
\end{equation}
The problem of finding the sharp constant in the Remez inequality, or at least of obtaining some suitable bounds on this constant,  has been extensively  studied starting from  the 1990s 
 (see, e.g., \cite{an, an1, bor, er, er3, er4, ga, gunz, kroo, nurs}) but some estimates were established  much earlier \cite{stul, turan}.

The sharp constant in (\ref{rem-or}) is known only in the case of  an interval, that is,
(see, e.g., \cite{er})
\begin{equation}\label{rem-or-or}
\|Q_n\|_{C[0,2\pi)}\le
\mathfrak{T}_{2n}\Big(\csc \frac a2\Big)
\|Q_n\|_{C[-a,a] },
 \qquad
 [-a,a] \subset [-\pi,\pi].
\end{equation}
 In the general case the problem remains open.  Erd\'{e}lyi \cite{er} proved that $$C(n,s)\le \exp({4ns}), \qquad s\le\frac\pi 2.$$
 Later on, this  result was slightly improved in \cite{ga} as follows $C(n,s)\le\exp({2ns})$.
Moreover, it is known \cite{nurs} that, 
 for a fixed $n$,
$$
 {C}(n, s)= 1+\frac{(ns)^2}{8} +O(s^4)\qquad\mbox{as}\quad s\to 0.
$$

Several papers studied the behavior of the   constant $C(n,s)$ in (\ref{rem-or}) for $B$ with large measure ($|B|=s\ge\frac\pi 2$), see \cite{er3, ga, nazarov}.
In this case, we have
 $$C(n,|B|)\le\left(\frac{A}{2\pi-s}\right)^{2n},$$
 where $A$ is an absolute constant which can be taken  as $17$.



The solution of  the aforementioned  Problem A is given as follows.
\begin{theorem}\label{original}
Let $Q\in \frak {N}_{n}$ be such that
\eqref{remez-33} holds for some $0<s< 2\pi$.
Then
\begin{equation}\label{remez-estimate}
\|Q\|_{C[0,2\pi)}\le
\mathfrak{T}_{2n}\Big(\sec \frac s4\Big).
\end{equation}
Moreover the equality holds if and only if
$$
Q(x)=e^{ic_1}\mathfrak T_{2n}\left(\sec \frac s 4\cos\frac {x-c_0} 2\right), \quad c_0, c_1\in\BR.
$$
\end{theorem}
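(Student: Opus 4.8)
The plan is to recast the trigonometric statement as an algebraic one on $\T$, to identify the single-arc case with the classical sharp estimate \eqref{rem-or-or}, and then to prove that an arbitrary ``bad set'' cannot beat an arc by passing to a conformal mapping onto a comb domain. First I would reduce the degree to the circle: writing $P(e^{ix}) = e^{inx}Q(x)$ produces an algebraic polynomial $P$ of degree at most $2n$ with $|P(e^{ix})| = |Q(x)|$ for all $x$, so that $\|Q\|_{C[0,2\pi)} = \sup_{\zeta\in\T}|P(\zeta)|$ and $\{x: |Q(x)|\le 1\} = \{\zeta\in\T: |P(\zeta)|\le 1\}$ with equal measure. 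This is precisely the algebraic picture recorded in the abstract, and the two equality families match (the factor $e^{-inx}$ turns $e^{i(nz+c_1)}$ into $e^{ic_1}$). I would dispose of the ``if'' direction by direct computation: since $\mathfrak T_{2n}$ is even, $\mathfrak T_{2n}(\sec\frac s4\cos\frac{x-c_0}2)$ is a genuine trigonometric polynomial of degree $n$; on the arc where $|\cos\frac{x-c_0}2|>\cos\frac s4$, an arc of measure exactly $s$, its modulus exceeds $1$ and attains $\mathfrak T_{2n}(\sec\frac s4)$ at $x=c_0$, so equality holds. Moreover the interval case of the inequality is already \eqref{rem-or-or} with $a = \pi - s/2$, since $\csc\frac a2 = \sec\frac s4$; thus it remains only to show that no measurable bad set improves on an arc.

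For this core step I would pass to the extremal problem $\sup\{\,\|Q\|_\infty : |\{|Q|\le1\}|\ge 2\pi-s\,\}$. A normal-families argument yields an extremizer, and multiplying by a unimodular constant one may take it real and, by the usual equioscillation reasoning, attaining $\pm\|Q\|_\infty$ and $\pm 1$ the expected number of times, so that $\{|Q|>1\}$ is a finite union of arcs. To such a $Q$ I would associate the conformal mapping of the relevant sheeted domain onto a comb domain---an upper half-plane (or half-strip) from which finitely many vertical slits are deleted---so that the base widths of the slits encode the arcs constituting $\{|Q|>1\}$, with total width governed by $|B|=s$, while the heights encode the over-unity maxima of $|Q|$; in particular $\log(\max|Q| + \sqrt{\max|Q|^2-1})$ is read off from the tallest tooth. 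The assertion to be proved is then that, for a fixed total base width, merging the teeth into one can only raise the height of the comb, with equality exactly for a single symmetric tooth, which corresponds to the explicit extremal above.

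I expect the main obstacle to be precisely this last comparison: setting up the exact dictionary between the analytic data of $Q$ (the arcs where $|Q|>1$, their measures, and the local maxima) and the metric data of the comb (slit positions, base widths, heights), and then establishing the monotonicity that, for fixed total base width, concentrating the slits into one strictly increases the height unless the configuration is already a single symmetric tooth. The rigidity contained in this monotonicity gives the ``only if'' part: equality forces a one-tooth comb, and inverting the conformal map recovers $Q(x)=e^{ic_1}\mathfrak T_{2n}(\sec\frac s4\cos\frac{x-c_0}2)$, with $c_0$ the rotation placing the tooth and $c_1$ the unimodular factor removed at the outset.
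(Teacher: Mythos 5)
Your overall strategy coincides with the paper's: reduce to an algebraic polynomial of degree $2n$ on $\T$, encode extremizers by conformal maps onto comb domains, and show that concentrating the exceptional set into a single arc is optimal (the single-arc case then being the known inequality \eqref{rem-or-or}, since indeed $\csc\frac{a}{2}=\sec\frac{s}{4}$ for $a=\pi-s/2$). The reduction, the ``if'' direction, and the matching of the equality families are all correct. But the proposal stops exactly where the paper's work begins, and you say so yourself: the ``dictionary'' and the ``monotonicity'' are asserted as the things to be proved, not proved. These are the entire mathematical content of the result. Concretely, two separate theorems are missing. First, the claim that an extremizer corresponds to a comb domain at all is the paper's Theorem \ref{th2}: it is established by Markov's method of correction (multiplying a putative extremizer by factors of the form $1-\delta\frac{(\zeta-e^{ic_0})(1-\zeta e^{-ic_0})}{(\zeta-e^{iz_0})(1-\zeta e^{-i\overline{z_0}})}$ to rule out complex or multiple zeros, force equioscillation, etc.), and only after Steps 1--5 does the Marchenko--Ostrovskii theorem yield the representation $T_n(e^{iz})=e^{inz/2}\cos\frac{n}{2}\theta(z)$. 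Your ``usual equioscillation reasoning'' does not by itself deliver this; note also that your preliminary normalization ``multiplying by a unimodular constant one may take it real'' is false for complex trigonometric polynomials --- the paper instead symmetrizes, replacing $F$ by $(F(z)+\overline{F(\overline z)})/2$ after factoring out $e^{inz/2}$, which preserves extremality only because the value at the distinguished point is controlled.

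Second, the key comparison is not a single ``merging teeth raises the height'' monotonicity, and proving it requires more than fixing ``total base width'': in the comb the slits have zero width, and the measures of the arcs where $|Q|>1$ are determined by harmonic measure, not prescribed. The paper needs two lemmas with a two-parameter balancing act. Lemma \ref{lemma}: for a non-regular set, pass to the $n$-regular extension, then \emph{grow} the main tooth $h_0$; the principle of harmonic measure shows the measure of the preimage of $\T$ decreases continuously, and a Carath\'eodory-kernel argument shows it tends to $0$, so one can restore the original measure while strictly increasing the value $\cosh\frac{n}{2}(h_0+h)$. Lemma \ref{lemma2}: to remove a gap one does not merge two teeth (their positions are constrained to the grid $2\pi j/n$); instead one \emph{shrinks} a secondary tooth $h_g$ to zero --- which increases the measure of $E$ while leaving the value $\cosh\frac{n}{2}h_0$ unchanged --- and then grows $h_0$ again to shed the excess measure, strictly increasing the value. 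Iterating kills all gaps but one, and only then does the explicit conformal computation for the one-tooth comb produce $\cosh\frac{h_0}{2}=\sec\frac{s}{4}$ and the stated extremal polynomial, with the rigidity statement coming from the strictness in these lemmas together with the uniqueness of the Problem D extremizer. Without proofs of these two components your argument is a correct road map of the paper's proof, not a proof.
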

Estimate (\ref{remez-estimate}) confirms a conjecture by Erd\'{e}lyi  \cite{er3}, also stated in \cite{ga}.

 The proof of Theorem \ref{original} is  based on a solution of the following more general problem.

\begin{problemb}
Let $$P_n(\zeta)= A_0+A_1\zeta+\cdots+A_n\zeta^n, \qquad \zeta=e^{iz},$$
and
$$ \mathcal{P}_n(E):=\Big\{P_n(\z):\
|P_n(\zeta)|\le 1, \zeta\in E\Big\},
$$
where $E\subset \T$ is a measurable set.
For a fixed $s\in (0,2\pi)$ find
$$
\sup\{\max\limits_{{}^{P_n\in \mathcal{P}_n(E)}_{c \in [0,2\pi)}
} |P_n(e^{ic})|: |E|\ge 2\pi-s\}. 
$$
\end{problemb}
Since for a trigonometric polynomial
$Q_{n}(x)=\sum_{-n\le k\le n} c_ke^{ikx}$ one has
$Q_{n}(x) e^{in x}=P_ {2n}(e^{ix})$ with $\|Q_{n}\|_{C}= \|P_ {2n}\|_{C}
$ and
 $|Q_{n}(x_0)|= |P_ {2n}(e^{ix_0})|$,
where
$$
\max\limits_{{}^{P_n\in \mathcal{P}_n(E)}_{c \in [0,2\pi)}
} |P_{2n}(e^{ic})|=|P_ {2n}(e^{ix_0})|,
$$
 Theorem \ref{original} will follow from the corresponding solution of Problem B
 for even-degree polynomials.
 For the partial case of  even trigonometric polynomials,  i.e.,  having the form $P_n(\cos t)$, see also the recent preprint \cite{er44}.
  We point out that we solve Problem B \textit{for all integer} $n$.

\smallskip

Now we can state our main result.
\begin{theorem}\label{thmain}
Let $P_n(\zeta)$ be an {algebraic} polynomial of degree $n$ bounded by one on a subset $E$ of the unit circle $\T$. If $|E|\ge 2\pi -s$
for some $0<s< 2\pi$, then
\begin{equation}\label{15sep1}
\sup_{\zeta\in\T}|P_n(\zeta)|\le\mathfrak{T}_{n}\left(\sec \frac{s} 4\right).
\end{equation}
Moreover, the equality holds if and only if
\begin{equation}\label{16sep3}
P_n(e^{iz})=e^{i(nz/2+c_1)}\mathfrak{T}_n\left(\sec\frac s 4\cos \frac {z-c_0} 2\right), \quad c_0,c_1\in\BR.
\end{equation}

\end{theorem}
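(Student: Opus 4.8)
The plan is to recast \eqref{15sep1} as an extremal problem, translate it into a question about conformal mappings onto comb domains, and prove that the extremal comb has a single tooth. First I would normalise. Enlarging the bad set $F:=\T\setminus E$ only relaxes the constraint $|P_n|\le 1$, so the supremum in \eqref{15sep1} is taken over $|F|=s$ and, after passing to degree-$n$ representatives, over polynomials of degree exactly $n$. A normal-families argument over closed admissible sets $E$ produces an extremal pair $(E^\ast,P^\ast)$ attaining $M:=\sup_\T|P^\ast|=\max_\T|P^\ast|$; the problem is rotation invariant, which I will use at the very end to fix the position of $F$. A variational perturbation of $P^\ast$ inside $\mathcal P_n(E^\ast)$ forces the familiar equioscillation picture: $|P^\ast|\equiv 1$ on $E^\ast$ away from finitely many alternation points, $|P^\ast|>1$ exactly on $F$, and the number of alternations is pinned by $\deg P^\ast=n$.

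The engine is the algebraic function
$$
B(\zeta)=P^\ast(\zeta)+\sqrt{P^\ast(\zeta)^2-\zeta^{\,n}},\qquad P^\ast=\tfrac12\bigl(B+\zeta^{\,n}/B\bigr),
$$
which is the correct circle analogue of the Joukowski/Chebyshev substitution (note $\deg(P^{\ast2}-\zeta^{\,n})=2n$). On $\T$ one has $|B|=1$ precisely on $E^\ast$, while on each arc of $F$ the branch with $|B|<1$ makes $-\log|B|>0$. Setting $\Theta=-i\log B$, so that $\Re\Theta=\arg B$ winds and $\Im\Theta=-\log|B|$, the disk $\D$ is mapped conformally onto a comb domain: the half-strip $\{0<\Re w<2\pi n,\ \Im w>0\}$ with one vertical slit (a ``tooth'') removed over each connected component of $F$. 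The height of a tooth equals the value of $\operatorname{arccosh}|P^\ast|$ at the corresponding local maximum, so the tallest tooth has height $\operatorname{arccosh}M$; the feet of the teeth cut out of the base a total length encoding $|F|=s$.

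The theorem thus reduces to a purely geometric extremal problem: among admissible combs of width $2\pi n$ whose slit feet occupy total measure $s$, maximise the height of the tallest tooth. Proving that the single-tooth comb is extremal — that coalescing all teeth into one slit of the full width $s$ can only raise the maximal height — is the step I expect to be the main obstacle. I would argue it by a symmetrization/monotonicity principle for the harmonic measure of the comb (equivalently, a variational computation showing any configuration with two or more teeth is improved by merging two of them), which simultaneously yields uniqueness of the extremal comb up to the rotation freedom. Once the comb is known to be single-toothed it must, by uniqueness of conformal maps, be the one realised by $z\mapsto \tfrac z2\pm\arccos\!\bigl(\sec\tfrac s4\cos\tfrac z2\bigr)$; unwinding $B$ gives $P^\ast(e^{iz})=e^{inz/2}\mathfrak T_n\!\bigl(\sec\tfrac s4\cos\tfrac z2\bigr)$ and, evaluating at the centre of the tooth, $M=\mathfrak T_n(\sec\tfrac s4)$. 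Restoring the rotation $c_0$ and the unimodular phase $c_1$ produces exactly the family \eqref{16sep3}, and following the equality case through the symmetrization step gives the ``only if'' direction.
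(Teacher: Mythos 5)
Your overall skeleton --- extremal polynomial, the substitution $P^\ast=\tfrac12\bigl(B+\zeta^n/B\bigr)$, a comb domain, reduction to a one-tooth comb, explicit computation for the single tooth --- is in spirit the same as the paper's, but there is a genuine gap exactly at the step you yourself flag as ``the main obstacle'', and it is compounded by a mis-identification of the comb. Writing $B=e^{in(z+\theta(z))/2}$, where $\theta$ is the comb map of Theorem \ref{th2}, your map is $\Theta=-i\log B=\tfrac n2\bigl(z+\theta(z)\bigr)$. On a component of $F$ the map $\theta$ has constant real part $\omega_k$, so $\Re\Theta=\tfrac n2(x+\omega_k)$ is strictly increasing there: the image of a gap under $\Theta$ is a tent-shaped boundary arc (rising to height $\tfrac n2 h_k$ and coming back down while moving right), \emph{not} a vertical slit, and its shape is dictated by the conformal map itself rather than being free geometric data. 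If instead you use $\theta$ alone (the paper's picture), the teeth are genuine vertical slits, but then their feet are single points. In either picture your proposed constraint --- ``slit feet occupy total measure $s$'' --- is not a property of the comb's geometry at all: $|F|=s$ is the Lebesgue measure of the \emph{preimage} of the slits under the conformal map, a harmonic-measure quantity. Consequently, the ``purely geometric extremal problem'' you reduce to (combs of width $2\pi n$, feet of total measure $s$, maximize the tallest tooth) is not equivalent to Problem B, and a symmetrization principle applied to it would not yield \eqref{15sep1}; moreover, no such principle is actually supplied.

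For comparison, the paper closes this crux with two lemmas in which the monotonicity is in the \emph{slit heights}, while the measure is tracked as harmonic measure at the origin. In Lemma \ref{lemma} the distinguished slit is raised, $h_0\mapsto h_0+h$: by Nevanlinna's principle of harmonic measure, $\tfrac1{2\pi}|\widehat{E_h}|=\omega(0,\T,\Omega_h)$ decreases in $h$, and by Carath\'eodory kernel convergence it tends to $0$, so some $h_\ast$ restores $|\widehat{E_{h_\ast}}|=|E|$ while the extremal value $\cosh\tfrac n2(h_0+h_\ast)$ strictly increases. In Lemma \ref{lemma2} another slit is lowered to zero, $h_g\downarrow 0$, which increases the measure and leaves the value $\cosh\tfrac n2 h_0$ unchanged; re-raising $h_0$ then deletes one tooth per step, and induction leaves a single tooth. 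Note that this scheme needs no existence of a global extremizer over sets $E$, whereas your normal-families step does (and glosses over why the limiting set still has measure $\ge 2\pi-s$; this can be repaired by passing to $\{\zeta:|P(\zeta)|\le 1\}$ for the limit polynomial). Two smaller inaccuracies: the claim that $|P^\ast|\equiv 1$ on $E^\ast$ away from finitely many alternation points is false --- on the extremal set $|P^\ast|=|\cos\tfrac n2\theta|$ oscillates between $0$ and $1$, touching $1$ only at finitely many points per period --- and your free-geometry formulation also ignores that admissible tooth positions are quantized, $\omega_k\in\tfrac{2\pi}{n}\mathbb{Z}$, which is forced by the periodicity $F(z+2\pi)=(-1)^nF(z)$.
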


We reduce 
 Problem B to Problems C and D below.
\begin{problemc}
Let $E$ be a closed proper subset of $\T$,
\begin{equation}\label{setE}
E=\T\backslash  \bigcup_{j=0}^g (e^{ia_j},e^{ib_j}),\quad 0\le g\le\infty.
\end{equation}
Let
$$
P_n(\zeta)= A_0+A_1\zeta+\cdots+A_n\zeta^n, \qquad \zeta=e^{iz},
$$
and
$$
\mathcal{P}_n(E):=\Big\{P_n(\z):\
|P_n(\zeta)|\le 1, \zeta\in E\Big\}.
$$
Find
$$
\max\limits_{{}^{P_n\in \mathcal{P}_n(E)}_{c \in (a_0,b_0)}
} |P_n(e^{ic})|. 
$$
\end{problemc}

\begin{problemd}
Let $E$ and $\cP_n(E)$ be given as in Problem C.
Find
$$
\max\limits_{P_n\in \mathcal{P}_n(E)} |P_n(e^{ic})|,\qquad c\in(a_0,b_0),  \quad\mbox{$c$\, is fixed}.
$$
\end{problemd}

In the next Section we
  reveal the structure of  extremal polynomials in Problem D.  We describe them by means of  certain conformal mappings on  the  comb domains,
  see Theorem \ref{th2}.
  Particularly, this allows us to define the so-called $n$-regular extension for a given set $E$
 and to give a relation between the solutions of Problems C and D.
  In Section 3 we prove our main theorem. First, in Lemma \ref{lemma} we show that an extremal configuration of a set $E$  for Problem B belongs to the class of  $n$-regular sets (a set which coincides with its $n$-regular extension). Second, in Lemma \ref{lemma2} we prove that among these sets a single-arc set is   extremal. Finally, we obtain an explicit formula for extremal polynomials using some elementary conformal mappings.

\vskip 0.6cm

\section{Comb domains and solutions of Problems D and C}
Comb-domains and the corresponding conformal mappings were introduced by Akhiezer and Levin, see  \cite{levin}.
Nowadays  they are actively used  in spectral theory
(see, e.g., \cite{mar}) and approximation theory
(see, e.g., \cite{yu3}). For a modern presentation, see
\cite{yu2}.
In this paper we will employ  only the 
 periodic $n$-regular comb domains.

\begin{Definition}
Let $g\in\N$ and  $\{h_k\}_{k=0}^g$ be a  collection of positive numbers.
The periodic $n$-regular comb domain is of the form
\begin{eqnarray*}
\Pi(h_0,\cdots,h_g)&=&\Pi(h_0,\cdots,h_g;\o_0,\cdots,\o_g)\\&=&\C_+ \backslash \bigcup_{k=0}^g \bigcup_{m\in \mathbb{Z}} \big\{z=\omega_k+2\pi m+iy\;\, (0<y\le h_k)
\big\},
\end{eqnarray*}
where
$\omega_k =2\pi\frac{j_k} n$, $0\le j_k<n$; see Fig. 1.
\end{Definition}

\begin{figure}[htbp]
\begin{center}
\includegraphics[scale=0.3]{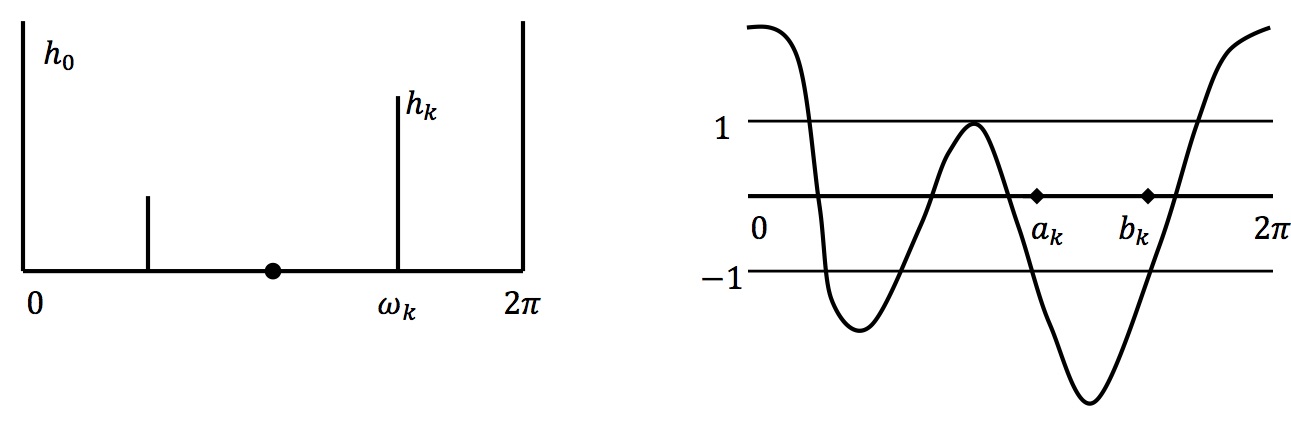}
\caption{Comb domain and graph of the function $\cos\frac n 2\theta(z)$ on the period}
\end{center}
\end{figure}

For $\Pi=\Pi(h_0,\cdots,h_g)$, consider the conformal mapping $\theta$  defined as follows:
\begin{equation}\label{theta1}
\theta: \C_+ \longrightarrow \Pi,\qquad
\theta(iy)\sim iy\quad\mbox{as}\quad y\to \infty.
\end{equation}
We say that a set $E$ is $n$-regular if $$E=\Big\{e^{iz}:z\in \theta^{-1}(\mathbb{R})\Big\},$$
where $\theta$ is given by (\ref{theta1}). Note that in this case
the  endpoints of the gaps in (\ref{setE}) are given by
\begin{eqnarray*}
&&a_k=\theta^{-1}(\omega_k-0),  \qquad
0\le k\le g;
\\&&b_k=\theta^{-1}(\omega_k+0),  \qquad
0\le k\le g.
\end{eqnarray*}


\begin{theorem} \label{th2}
For an  extremizer $T_n(e^{iz},e^{ic_0}, E)$ of Problem D, $c_0\in(a_0,b_0)$, there exists
the representation
\begin{equation}\label{16sep7}
T_n(e^{iz})=e^{\frac{inz}{2}}\cos\frac n2\theta(z),
\end{equation}
where $\theta$ is  a conformal mapping $\C_+ \to \Pi$ with the
normalization
\begin{eqnarray}\label{vsp}
\theta(iy)\sim iy \qquad \mbox{as}\quad y\to \infty.
  \end{eqnarray}
Moreover,
\begin{eqnarray}\label{vsp+}
\theta(a_0)=0-,
\qquad\theta(b_0)=0+.
  \end{eqnarray}
Consequently,   the extremizer does not depend on a position of $c_0$ in the given gap $(a_0,b_0)$.
\end{theorem}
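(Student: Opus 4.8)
The plan is to build the map $\t$ directly from the extremal polynomial, interpreting $\tfrac{2}{n}\operatorname{arccosh}|T_n|$ as the imaginary part (a Green-type function) and its harmonic conjugate as the real part of $\t$. First I would establish existence: as $E$ is a proper closed subset of $\T$ of positive capacity, the bound $|P_n|\le 1$ on $E$ controls $\sup_{\T}|P_n|$ by a Bernstein--Walsh/Remez estimate, so $\cP_n(E)$ is compact in the coefficient space and the continuous functional $P_n\mapsto|P_n(e^{ic_0})|$ attains a maximum $T_n$. Next I would reduce to a self-inversive extremizer: the conjugate-reciprocal polynomial $T_n^{*}(\z)=\z^{n}\overline{T_n(1/\bar\z)}$ satisfies $|T_n^{*}|=|T_n|$ on $\T$, hence also solves Problem D; since, for the optimal phase $\g$, the extremal problem $\max\{\operatorname{Re}(e^{-i\g}P_n(e^{ic_0})):\|P_n\|_{C(E)}\le 1\}$ is a Chebyshev-type linear problem with a unique solution, we get $T_n=e^{i\g}T_n^{*}$. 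Absorbing the unimodular factor, $F(z):=e^{-inz/2}T_n(e^{iz})$ is real on $\BR$ and obeys $F(z+2\pi)=(-1)^{n}F(z)$.

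Set $\t(z):=\tfrac{2}{n}\arccos F(z)$, the branch fixed by continuity and $\operatorname{Im}\t\ge 0$ on $\C_+$. A variational argument yields equioscillation: perturbing $T_n$ within $\cP_n(E)$ shows that $|T_n|=1$ at enough points of $E$, with $F$ alternating between $+1$ and $-1$, so $F$ traverses $[-1,1]$ monotonically on each arc of $E$ and leaves $[-1,1]$ precisely on the gaps. Writing $\tfrac{n}{2}\t=\xi+i\eta$, reality of $F$ forces $\sin\xi=0$ on a gap; hence there $\operatorname{Re}\t$ is constant and equal to some $\tfrac{2m\pi}{n}$, while $\operatorname{Im}\t=\tfrac{2}{n}\operatorname{arccosh}|T_n|$ rises from $0$ to a height $h_k$ --- a vertical slit --- whereas on $E$ one has $\operatorname{Im}\t=0$ with $\operatorname{Re}\t$ increasing along the real axis. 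Thus $\t(\C_+)$ is $\C_+$ with vertical slits based at the points $\omega_k$, i.e.\ a comb domain $\Pi$, and $\eqref{16sep7}$ holds by the very definition of $\t$.

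It remains to verify that $\t$ is a \emph{conformal} bijection onto $\Pi$ and to identify the combinatorial data. Injectivity and the boundary correspondence $a_k\mapsto\omega_k-0$, $b_k\mapsto\omega_k+0$ follow from the argument principle together with the monotonicity of $F$ on the arcs of $E$; surjectivity onto the comb is then standard. The normalization $\eqref{vsp}$ comes from the behaviour at $z=iy\to i\infty$, where $e^{iz}\to 0$ and $|F|$ grows like $e^{ny/2}$, so $\operatorname{Im}\t\sim y$ and $\operatorname{Re}\t\to\mathrm{const}$, giving $\t(iy)\sim iy$. For $n$-regularity, at each gap endpoint $F=\pm1$, i.e.\ $\tfrac{n}{2}\t\in\pi\mathbb{Z}$, forcing $\omega_k=\tfrac{2\pi j_k}{n}$; moreover single-valuedness and $\deg T_n=n$ force the phase to increase by $n\pi$ over a period, i.e.\ $\t(z+2\pi)=\t(z)+2\pi$. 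Fixing the remaining additive (rotational) freedom so that the distinguished gap lies over the origin yields $\omega_0=0$, that is $\eqref{vsp+}$.

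Finally, for the independence of $c_0$, note that in the gap over the origin $\t(c_0)=i\operatorname{Im}\t(c_0)$ with $0<\operatorname{Im}\t(c_0)\le h_0$, so $|T_n(e^{ic_0})|=\cosh\tfrac{n}{2}\operatorname{Im}\t(c_0)$ is strictly increasing in the slit height $h_0$. Hence the extremizer is the admissible comb polynomial of \emph{maximal} $h_0$ subject to $|T_n|\le 1$ on $E$ and $\deg T_n=n$ --- a requirement involving only $E$, $n$, and the choice of gap, not the location of $c_0$ --- so it is the same for every $c_0\in(a_0,b_0)$, whence the extremizer does not depend on $c_0$. The main obstacle is the conformal-bijection step of the third paragraph: proving global injectivity of $\t$ and extracting exactly the $n$-regular slit positions, which rests on the equioscillation count and the single-valuedness of $T_n$; the uniqueness needed for the self-inversive reduction in the first paragraph is the other delicate point.
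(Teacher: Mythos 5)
Your overall strategy for the representation \eqref{16sep7} --- perturb the extremizer to force an equioscillation structure on $F(z)=e^{-inz/2}T_n(e^{iz})$ and then read off a comb mapping --- is essentially the paper's, which implements the perturbations as explicit Markov correction factors (Steps 1--5) and then invokes the Marchenko--Ostrovskii theorem to obtain $F=\cos\frac n2\theta$ rather than constructing $\theta$ by hand. The genuine gap is in your last paragraph, the independence of $c_0$. You characterize the extremizer as ``the admissible comb polynomial of maximal $h_0$,'' justified by saying that $|T_n(e^{ic_0})|=\cosh\frac n2\operatorname{Im}\theta(c_0)$ ``is strictly increasing in the slit height $h_0$.'' This is circular: $\operatorname{Im}\theta(c_0)$ depends on the whole comb and on the position of $c_0$ inside the gap, not on $h_0$ alone, so two admissible combs with $h_0^{A}>h_0^{B}$ need not satisfy $\operatorname{Im}\theta^{A}(c_0)\ge\operatorname{Im}\theta^{B}(c_0)$ at every fixed $c_0$ (for instance, when the gap of $A$ is strictly larger than that of $B$, the comparison near the common gap endpoints is unclear a priori). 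The claim that one comb polynomial maximizes $\operatorname{Im}\theta(\cdot)$ simultaneously at all points of the gap is exactly the assertion ``the extremizer does not depend on $c_0$,'' so it cannot serve as its own justification. The paper proves this pointwise domination (Step 6) by a counting argument: for any admissible $G$ real on $\BR$ and any $\varepsilon>0$, the function $H_\varepsilon=(1+\varepsilon)F-G$ changes sign at the $n$ alternation points $x_k$ where $F=\pm1$, hence has at most one zero per interval $(x_k,x_{k+1})$, in particular at most one in $(a_0,b_0)$; since $F(a_0)=F(b_0)=1$ gives $H_\varepsilon>0$ at both gap endpoints, $H_\varepsilon>0$ on the whole gap, so $|G|\le F$ there. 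Some version of this argument is indispensable and is missing from your plan.

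Two smaller points. First, your reduction to a real-valued $F$ rests on uniqueness of the solution of the complex Chebyshev-type problem $\max\{\operatorname{Re}(e^{-i\gamma}P_n(e^{ic_0}))\}$, which you do not prove; in the paper uniqueness is a \emph{consequence} of the theorem (Remark \ref{rem22a}), not an input. The paper avoids the issue by symmetrization: once $F(c_0)>0$, the polynomial $e^{inz/2}\bigl(F(z)+\overline{F(\overline z)}\bigr)/2$ is again admissible and attains the same value at $c_0$, so one may simply assume $F$ is real on $\BR$. Second, your phrase ``a variational argument yields equioscillation'' compresses what is the technical core of the proof (no complex zeros, simple zeros, a point of $E$ with $|F|=1$ between consecutive zeros, values $1$ at $a_0,b_0$, all critical points real with critical values of modulus at least $1$); each of these requires its own correction function, though your plan does point in the right direction there.
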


\begin{proof}
The proof is based on  Markov's method of correction of an extremal function
\cite[Chapter 7]{yu3}, see also  \cite[Theorem 3.2]{ahlfors}.

Let $c_0\in (a_0,b_0)$ and
 $T_n(e^{iz})=T_n(e^{iz},e^{ic_0}, E)$ be an extremizer  in Problem D.
We write it as follows
$$
T_n(e^{iz})=e^{i\frac{zn}{2}}F(z),
$$
where $F$ is an entire function such that $F(z+2\pi)=(-1)^{n}F(z).$ Without loss of generality
we assume that $F(c_0)>0$.
Noting that the function $e^{i\frac{zn}{2}}(F(z)+\overline{F(\overline{z})})/2$ is also an extremizer  in Problem D,
we may suppose that
$F(z)\in \mathbb{R}$ whenever  $z\in\mathbb {R}$.
We divide the rest of the proof into 6 steps.

Step 1. We show that $F$ has no complex zeros.
Assume that $F(z_0)=0$, Im $z_0>0$, which gives that $F(\overline{z_0})=0$.
Define the Markov correction function $(\delta>0)$
$$
Q(\zeta)=T_n(\zeta)\left(
1-\delta\frac{( \zeta-e^{ic_0})( 1-\zeta e^{-i{c_0}})}{( \zeta-e^{iz_0})( 1-\zeta e^{-i\overline{z_0}})}\right),
$$
which is a polynomial of degree at most $n$ satisfying
$T_n(e^{ic_0})=Q(e^{ic_0})$.
Since the expression in the brackets is equal to
$$
1-\delta
\Big|\frac{\zeta-e^{ic_0}}{ \zeta-e^{iz_0}}
\Big|^2$$ whenever $\zeta\in \T$,
 we have
$\max_{\zeta\in E} |T_n(\zeta)|
>
\max_{\zeta\in E} |Q(\zeta)|
$
for small enough  $\delta$.
 This contradicts that $T_n$ is an extremizer.

Step 2. We prove that zeros of $F$ are simple.
Here we use the similar correction function $(\delta>0)$
$$
Q(\zeta)=T_n(\zeta)\left(
1-\delta\frac{( \zeta-e^{ic_0})( 1-\zeta e^{-i{c_0}})}{( \zeta-e^{iz_0})( 1-\zeta e^{-i{z_0}})}\right)
$$
and follow the proof of the previous step.

Step 3. Let us show that between two consecutive zeros of $F$ (take, for example, $F(z_1)=F(z_2)=0$)
  there is $y$ such that $|F(y)|=1$ and $e^{iy}\in E$.
Assume the reverse:
$$
\text{(i)}\ \{e^{iy}: y\in (z_1,z_2)\}\cap E=\varnothing\quad \text{or}\quad
\text{(ii)}\ \max_{y\in (z_1,z_2)}|F(y)|<1.
$$ Then we
define the correction function   $(\delta>0)$
$$
Q(\zeta)=e^{i\frac{nz}{2}}G(\zeta),
\qquad
G(\zeta)=F(\zeta)\left(
1-\delta\frac{ \sin^2\frac{z-c_0}{2}}{\sin\frac{z-z_1}{2}\sin\frac{z-z_2}{2}}\right).
$$
On the set $I_1=(z_1-\varepsilon, z_1+\varepsilon)\cup (z_2-\varepsilon, z_2+\varepsilon)$
taking sufficiently small $\varepsilon$ and $\delta$, we have $|G(z)|<1.$
In the case (i) the function $|G(z)|$ on the set $I_2=(z_1+\varepsilon, z_2-\varepsilon),$ should not be restricted. In the case (ii) since $\max_{z\in I_2} |F(z)|$ has a fixed value
less then one,
for sufficiently small $\delta$ we obtain $|G(z)|<1.$
Finally, on the set
$I_3=[-\pi,\pi]\backslash (I_1\cup I_2)$ we always have $\max_{z\in I_3} |G(z)|<1$.
Then
$\max_{\zeta\in E} |T_n(\zeta)|
>
\max_{\zeta\in E} |Q(\zeta)|
$
gives a contradiction.

Step 4. The following condition holds:
 $F(a_0)=F(b_0)=1$. In our normalization this corresponds to \eqref{vsp+}.
Here we assume that $F(b_0)<1$ and take the first zero $F(\xi)=0$, $\xi>b_0$.
Considering the function $(\delta>0)$
$$
G(\zeta)=F(\zeta)\left(
1-\delta\frac{ \sin\frac{z-c_0}{2}}{\sin\frac{z-\xi}{2}}\right)
$$
implies a contradiction.

Step 5. Let $\{z_k\}_{k=1}^n$ be zeros of the function $F(z)$ in a period, respectively, $\{e^{iz_k}\}_{k=1}^n$
are zeros of $T_n(\z)$. Between each two of them (including the pair $(z_n,z_{n+1})$, with the agreement $z_{n+1}:=z_1+2\pi$) there is at least one critical point $y_k$ such that $F'(y_k)=0$.
Since
$$
F'(z)=e^{i(-n/2)z}\left(i\z T'_n(\z)-i\frac n 2 T_n(\z)\right),
$$
the total number of these points in a period is at most $n$.
Thus all critical points of the function $F(z)$ are real. Also, by step 3, in each interval $(z_k,z_{k+1})$ there  is a point
$x_k$, $e^{ix_k}\in E$, such that
$|F(x_k)|=1$. Therefore at the critical point we have $|F(y_k)|\ge 1$. An example 
  of such a function is shown in Fig. 1. According to the Marchenko--Ostrovskii theorem,
see, e.g., \cite[Section 7.3]{yu3}, the function $F(z)$ possesses the comb representation $F(z)=\cos \frac n 2\t(z)$ with $\t(z)$ normalized by \eqref{vsp}. Since $F(z)$ is periodic, $F(z+2\pi)=(-1)^n F(z)$, the corresponding comb domain is periodic. That is, \eqref{16sep7} is proved.

Step 6.  We prove the last assertion of the theorem. Let $c$ be an arbitrary point in $(a_0,b_0)$. We will show that
\begin{equation}\label{22sep1}
|P_n(e^{ic})|\le |T_n(e^{ic})|,\quad \forall P_n(\z)\in \cP_n(E).
\end{equation}
First we note that there is a polynomial $Q_n(\z)\in\cP_n(E)$  of the form
$$
Q_n(e^{iz})=e^{inz/2}G(z),\quad G(z)=\overline{G(\overline{z})}
$$
such that $|Q_n(e^{ic})|=|P_n(e^{ic})|$. For an arbitrary positive $\ve$ consider the function
$$
H_\ve(z)=(1+\ve)F(z)-G(z).
$$
This function attains 
  positive and negative values at points $\{x_k\}_{k=1}^n$, since $F(x_k)=\pm 1$, see step 5. Therefore each interval $(x_k,x_{k+1})$ (with our  agreement
$x_{n+1}:=x_1+2\pi$) contains at most one zero of $H_{\ve}(z)$. In particular, the interval $(a_0,b_0)$ may have at most one zero of this function.
But, due to step 4,
$$
H_{\ve}(a_0)=(1+\ve)-G(a_0)>0 \quad\text{and}\quad H_\ve(b_0)=(1+\ve)-G(a_0)>0.
$$
Thus, for all $\ve>0$,  $H_\ve(z)$ is positive on this interval and we obtain
$$
|P_n(e^{ic})|=|Q_n(e^{ic})|= |G(e^{ic})|\le F(e^{ic})=|T_n(e^{ic})|,
$$
that is,
\eqref{22sep1} holds.

\end{proof}

\begin{remark}\label{rem22a}
In fact, we proved that
$$
T_n(e^{ic}, e^{ic_0},E)=e^{inc/2}\sup_{P_n\in\cP_n(E)}|P_n(e^{ic})|,\quad \forall c\in(a_0,b_0),
$$
which also implies the uniqueness of the extremal polynomial.
Since the extremizer does not depend on the position of $c_0\in (a_0,b_0)$, in what follows we  write $T_n(\z,(a_0,b_0),E)$ instead of
$T_n(\z,e^{ic_0},E)$.

Moreover, from the proof we see that
a polynomial of the presented form \eqref{16sep7}
 is extremal
on
every set $\tilde E$, which contains at least one of possibly two different points
$e^{i\theta^{-1}(2\pi k/n\pm 0)}$ for $k=1,\cdots,n-1$, together with the endpoints $e^{ia_0}$ and $e^{ib_0}$. Such collections of points  form the so-called maximal  Chebyshev sets \cite[Section 7.2]{yu3}
 for the given extremal function. Note also the fact that every periodic comb generates a periodic function is shown in \cite[Appendix A]{damanik}.
 \end{remark}

\begin{Definition}
Let $E$ be a closed proper subset of $\T$. Let $T_n(\z,(a_0,b_0),E)$ be the extremizer associated to the gap $(a_0,b_0)$
 presented in the form \eqref{16sep7}.
We will call
$$\widehat{E}=\Big\{e^{iz}:z\in \theta^{-1}(\mathbb{R})\Big\}$$
the $n$-regular extension of the set $E$
associated to the gap $(a_0, b_0)$. This is the maximal possible set on which $T_n(\z,(a_0,b_0),E)$ remains extremal in the sense of Problem D.

\end{Definition}

\begin{remark}\label{rem22}
Since the $n$-regular extension does not depend on $c\in (a_0, b_0)$ we obtain a connection between the solutions of Problems C and D.
Let  $c_*=\t^{-1}(ih_0)$. Then
$$
\max\limits_{{}^{P_n\in \mathcal{P}_n(E)}_{c \in (a_0,b_0)}
} |P_n(e^{ic})|=
\max\limits_{P_n\in \mathcal{P}_n(E)} |P_n(e^{ic_*})|=|T_n(e^{ic_*},(a_0,b_0),\widehat{E})|=\cosh \frac n 2 h_0.
$$
Without loss of generality,
we will assume  that $c_*=0$ and impose the third normalization condition for $\theta$ (see (\ref{vsp}))  given
by
\begin{equation}\label{22sep3}
\theta(0)=ih_0.
\end{equation}
\end{remark}

\vskip 0.6cm

\section{Proof of Theorem \ref{thmain}}

We start with the following two lemmas.

\begin{lemma}\label{lemma}
For any
$$
E=\T\backslash  \bigcup_{j=0}^g (e^{ia_j},e^{ib_j}), \qquad 1\le g\le \infty
$$
there is an $n$-regular set
$${E}^*
=\T\backslash  \bigcup_{j=0}^{{g}^*} (e^{i a_j^*},e^{i b_j^*}), \quad 1\le {g}^*< \infty,
$$
 such that
$|{E}^*|=|E|$ and moreover,
$$
\max_{c\in(a_0,b_0)}|T_n (e^{ic}, (a_0,b_0), E)|=|T_n (1, (a_0,b_0), E)|\le |T_n (1, (a_0^*,b_0^*), {E}^*)|.$$
\end{lemma}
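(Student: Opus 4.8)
The plan is to pass to the $n$-regular extension of $E$ and then rescale the resulting comb so as to restore the measure $|E|$ while not decreasing the value in question.

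Fix the principal gap $(a_0,b_0)$ and let $T_n(e^{iz})=T_n(e^{iz},(a_0,b_0),E)$ be the Problem~D extremizer, normalized as in Remark~\ref{rem22} so that $c_*=0$ and $\theta(0)=ih_0$. By Theorem~\ref{th2} it has the comb form \eqref{16sep7}, $T_n(e^{iz})=e^{inz/2}\cos\frac n2\theta(z)$, and by Remark~\ref{rem22}
$$
\max_{c\in(a_0,b_0)}|T_n(e^{ic},(a_0,b_0),E)|=|T_n(1,(a_0,b_0),E)|=\cosh\tfrac n2 h_0,
$$
which already gives the equality asserted in the lemma. Consider the $n$-regular extension $\widehat E$ of $E$ along $(a_0,b_0)$, defined by this same $\theta$. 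On $\T$ one has $|T_n(e^{iz})|=|\cos\frac n2\theta(z)|$, so $|T_n|\le1$ exactly on $\widehat E$, while on each gap $\theta(z)=\omega_k+iv$ gives $|\cos\frac n2\theta(z)|=\cosh\frac{nv}2\ge1$ (equality only at the slit tips). Hence $E\subseteq\widehat E$, so $|\widehat E|\ge|E|$; moreover $\widehat E$, being $n$-regular, has at most $n$ complementary arcs and therefore finitely many gaps.

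Next I would deform $\widehat E$. Keeping the slit positions $\omega_k=2\pi j_k/n$ fixed, scale all heights by a factor $t\ge1$: set $\Pi_t=\Pi(th_0,\dots,th_g;\omega_0,\dots,\omega_g)$, let $\theta_t\colon\C_+\to\Pi_t$ be the conformal map normalized by \eqref{vsp} and \eqref{22sep3}, and put $E^*(t)=\{e^{iz}:z\in\theta_t^{-1}(\mathbb{R})\}$. Each $\Pi_t$ is again a periodic $n$-regular comb, so $E^*(t)$ is $n$-regular with the same finite number of gaps, and by Remark~\ref{rem22a} the function $e^{inz/2}\cos\frac n2\theta_t(z)$ is the extremizer on $E^*(t)$; hence $|T_n(1,(a_0^*,b_0^*),E^*(t))|=\cosh\frac n2(th_0)$. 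It therefore suffices to produce $t^*\ge1$ with $|E^*(t^*)|=|E|$: then $h_0^*=t^*h_0\ge h_0$ yields $|T_n(1,(a_0^*,b_0^*),E^*)|=\cosh\frac n2 h_0^*\ge\cosh\frac n2 h_0$, which is exactly the inequality to be proved, and $E^*=E^*(t^*)$ is the desired set. To find $t^*$ I would apply the intermediate value theorem to $\mu(t):=|E^*(t)|$. The comb $\Pi_t$ varies continuously with $t$, the maps $\theta_t$ converge in the Carath\'{e}odory sense, and so $\mu$ is continuous, with $\mu(1)=|\widehat E|\ge|E|$. If one had $\mu(t)\ge|E|=2\pi-s_0$ for all $t\ge1$, then each extremizer $e^{inz/2}\cos\frac n2\theta_t$ would be bounded by $1$ on a set of measure at least $2\pi-s_0$ yet have sup-norm $\cosh\frac n2(th_0)\to\infty$ as $t\to\infty$, contradicting the elementary (non-sharp) Remez bound $C(n,s_0)<\infty$ recalled in the Introduction. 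Thus $\mu(t_1)<|E|$ for some $t_1\ge1$, and continuity provides $t^*\in[1,t_1]$ with $\mu(t^*)=|E|$.

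The one genuinely analytic point is the continuity of $t\mapsto\mu(t)$, that is, that the Lebesgue measure of the real preimage $\theta_t^{-1}(\mathbb{R})$ depends continuously on the heights of the comb; this is where I expect the technical work to lie, and it should follow from the standard continuous (kernel-convergence) dependence of the comb conformal maps on their height parameters. Once this is granted, the conceptual heart---that one can return to the prescribed measure $|E|$ without decreasing $h_0$---is immediate by pairing the blow-up of $\cosh\frac n2(th_0)$ with the qualitative Remez inequality, and the preservation of $n$-regularity and of the finiteness of the gap count is automatic from the construction.
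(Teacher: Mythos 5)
Your proposal is correct, and its first half coincides with the paper's proof: pass to the $n$-regular extension $\widehat E$ (which has finitely many gaps) and then deform the comb so that the measure returns to $|E|$ while the principal height, and hence the value $\cosh\frac n2 h_0$ at $\zeta=1$, does not decrease. The second half takes a genuinely different route. The paper raises only the principal slit, $\Pi_h=\Pi(h_0+h,h_1,\dots,h_{\widehat g})$, proves that $|\widehat{E_h}|$ decreases in $h$ via Nevanlinna's principle of harmonic measure (transplanting to the radial-slit disk by $w_h=e^{i\widehat{\theta_h}}$, so that $\tfrac1{2\pi}|\widehat{E_h}|=\omega(0,\T,\Omega_h)$), and shows $|\widehat{E_h}|\to0$ by a Carath\'{e}odory kernel-convergence estimate with an auxiliary normalization $\widetilde{\theta_h}$; everything stays inside conformal-mapping theory. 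You instead scale all heights by $t$ and, in place of the decay estimate, force the measure below $|E|$ by contradiction with the qualitative non-sharp Remez bound $C(n,s)<\infty$ recalled in the Introduction; this is legitimate and not circular, since those bounds are prior results in the literature (to apply them to an algebraic $P_n$ of arbitrary degree note that $|P_n|^2$ is a trigonometric polynomial of degree $n$), and it buys a shorter argument at the price of self-containedness. The continuity of $t\mapsto|E^*(t)|$, which you correctly isolate and defer to kernel convergence, is in fact also needed implicitly by the paper (monotonicity plus decay alone does not yield an exact crossing), so this is not a gap peculiar to your version, though the paper's monotonicity makes the crossing unique and its reliance on continuity lighter. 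Two harmless slips: with the scaled comb the normalization \eqref{22sep3} should read $\theta_t(0)=ith_0$ (the tip of the scaled principal slit), and on a gap the equality $|\cos\frac n2\theta|=1$ occurs at the gap endpoints (where the imaginary part is $0$), not at the slit tips.
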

\begin{proof}
In the proof we deal with the gap corresponding to $j=0$, so the dependence of $(a_0,b_0)$ will be dropped.

Let $E$ be not $n$-regular. Consider $\widehat{E}$. Since the extension is proper, we have
$|\widehat{E}|>|{E}|$ and moreover, $T_n(\zeta,E)=T_n(\zeta,\widehat{E})$.
Note that $\widehat{E}$ has a finite number of gaps, i.e., $\widehat{g}<\infty$.
This is because  a number of  critical points (where the derivative of $e^{-\frac{inz}{2}}T_n(e^{iz})$ is zero) on a period
is finite
and each gap contains a critical point, see Fig. 1.

Let $\Pi=\Pi(h_0,h_1,\cdots,h_{\widehat{g}})$ be the comb associated to the extremal polynomial $T_n(e^{iz}, \widehat{E})$.
Let also ${\Pi}_h=\Pi(h_0+h,h_1,\cdots,h_{\widehat{g}})$ with $h\ge0$ and
$\widehat{\theta_h}$ be the corresponding conformal mapping normalized exactly as \eqref{vsp}, \eqref{22sep3} and respectively
$T_n(e^{iz},\widehat{E_h})=\cos \frac{n}2\widehat{\theta_h}(z) e^{\frac{inz}{2}}$ (by Theorem \ref{th2}).

We will show that $ |\widehat{E_h}|$ is decreasing with $h$ and tends to zero as $h\to\infty$. In the same time,
$T_n(1,\widehat{E_h})= \cosh \frac n2(h+h_0)$, increasing with $h$.
Therefore, for some $h_*>0$, we will have
$|\widehat{E_{h_*}}|=|E|$ and $\cosh \frac n2(h_*+h_0)>\cosh \frac n2h_0$.
Thus, one can set $E^*=\widehat{E_{h_*}}$.

It is left to verify that
 $ |\widehat{E_h}|$ is decreasing with $h$
and that $ |\widehat{E_h}|\to 0$ as $h\to \infty$.
First, define $w_h(\zeta)=e^{i \widehat{\theta_h}(z)}$, $\zeta=e^{iz}$. Since $\widehat{\theta_h}(z)$ is $2\pi$-periodic
 this map is well-defined.
This is a conformal mapping of the unit disk on the radial slit domain (the unit disk with the system of radial slits), that is,
$$
w_h: \D\to\D\backslash \bigcup_{k=0}^{\widehat{g}} \Big\{w=e^{i\omega_k-y}, \;\; 0<y\le h+h_0, \; 0<y\le h_k, 1\le k\le \widehat{g}\Big\} =:\Omega_h.
$$
According to
the principle of harmonic measure \cite[Chapter 3]{nev},
the harmonic measure $\omega(0,\T, \Omega_h)$ is monotonic with $h$. On the other hand, since $\widehat{E_h}=w_h^{-1}(\T)$ and $w_h(0)=0$, we have
$$\frac1{2\pi}|\widehat{E_h}|=
\omega(0,\widehat{E_h}, \D)
=
\omega(0,\T, \Omega_h).
$$

Second, consider the conformal mapping $\widetilde{\theta_h}:\C_+ \to \Pi_h$, which satisfies another normalization
$$
\widetilde{\theta_h}(-1)=0,\quad \widetilde{\theta_h}(1)=2\pi,\quad
\widetilde{\theta_h}(\infty)=\infty.
$$
We define
$$
-C_-(h):=\widetilde{\theta_h}^{-1}(i(h+h_0)),\qquad
 C_+(h):=\widetilde{\theta_h}^{-1}(2\pi+i(h+h_0)).
$$
By Carath\'{e}odory kernel convergence theorem \cite[p. 28]{pomm},
 in this normalization the sequence of conformal mappings has a limit, and $\lim_{h\to \infty} C_{\pm}(h)=\infty$. Comparing
$\theta$ and $\widetilde{\theta}$, we obtain
$$\frac 1 {2\pi }|\widehat{E_h}|\le \frac{2}{C_-(h)+ C_+(h)}\longrightarrow 0\qquad\mbox{as} \qquad h\longrightarrow\infty.$$

\end{proof}

\begin{lemma}\label{lemma2} Let
$$
E=\T\backslash  \bigcup_{j=0}^g (e^{ia_j},e^{ib_j}), \qquad 1\le g< \infty
$$
be an $n$-regular set.
Then there exists
 $$\widehat{E}=
\T\backslash  \bigcup_{j=0}^{g-1} (e^{i\widehat{a}_j},e^{i\widehat{b}_j})
$$ such that
$|\widehat{E}|=|E|$ and
$|T_n (1, E)|\le |T_n (1, \widehat{E})|$.
\end{lemma}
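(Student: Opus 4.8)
The plan is to remove a single tooth of the comb associated with $T_n(\cdot,E)$ and then enlarge the distinguished tooth---the one lying over the gap $(a_0,b_0)$---so as to restore the original measure. Since by Remark~\ref{rem22} the quantity $|T_n(1,E)|$ equals $\cosh\frac n2 h_0$, where $h_0$ is the height of the distinguished tooth, and this height can only increase under such a procedure, the inequality $|T_n(1,E)|\le|T_n(1,\widehat E)|$ will follow. Let $\Pi=\Pi(h_0,h_1,\dots,h_g)$ be the comb associated with the extremizer via Theorem~\ref{th2}, normalized by \eqref{vsp} and \eqref{22sep3}. As $g\ge 1$, there is a tooth $h_k$ with $k\ge 1$; fix one such $k$.

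First I would pass to the comb $\Pi'=\Pi(h_0,\dots,h_{k-1},0,h_{k+1},\dots,h_g)$ obtained by shrinking the $k$-th tooth to a point, so that $\Pi'$ has exactly $g$ teeth. Exactly as in the proof of Lemma~\ref{lemma}, the associated $n$-regular set has measure $2\pi\,\o(0,\T,\O)$, where $\O$ is the radial slit image of $\D$ under $w(\z)=e^{i\widehat{\t}(z)}$, $\z=e^{iz}$; by the principle of harmonic measure \cite[Chapter 3]{nev}, deleting the $k$-th slit strictly increases this harmonic measure, so the set associated with $\Pi'$ has measure strictly larger than $|E|$. Next, keeping $h_k=0$ and the remaining heights fixed, I would let the height of the distinguished tooth grow: for $H\ge h_0$ put $\mu(H)$ for the measure of the $n$-regular set associated with $\Pi(H,h_1,\dots,h_{k-1},0,h_{k+1},\dots,h_g)$. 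By the harmonic-measure argument of Lemma~\ref{lemma}, $\mu$ is continuous and strictly decreasing, with $\mu(h_0)>|E|$ by the preceding step and $\mu(H)\to 0$ as $H\to\i$ (the latter as in Lemma~\ref{lemma}, via the Carath\'{e}odory kernel convergence theorem \cite[p.~28]{pomm}). Hence there is a unique $H^*>h_0$ with $\mu(H^*)=|E|$, and taking $\widehat E$ to be the $n$-regular set associated with this last comb gives a set with $g$ gaps, $|\widehat E|=|E|$, and, by Remark~\ref{rem22}, $|T_n(1,\widehat E)|=\cosh\frac n2 H^*>\cosh\frac n2 h_0=|T_n(1,E)|$.

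The main obstacle, and the only real content beyond Lemma~\ref{lemma}, lies in the two monotonicity statements for harmonic measure: that removing the $k$-th tooth strictly increases the measure, and that $\mu(H)$ is strictly decreasing and continuous. Both are instances of the principle of harmonic measure for nested radial slit domains sharing the free boundary $\T$, together with the normalization $w(0)=0$, precisely as in Lemma~\ref{lemma}; the point requiring care is that all the conformal maps in the deformation are taken with the common normalization \eqref{vsp}, \eqref{22sep3}, so that the tip of the distinguished tooth always sits over $z=0$ and the value of the extremizer at $\z=1$ is genuinely $\cosh\frac n2 H$ throughout. Iterating this construction reduces an arbitrary finite-gap $n$-regular set to a single-arc set without decreasing $|T_n(1,\cdot)|$, which is the role the lemma plays in the proof of Theorem~\ref{thmain}.
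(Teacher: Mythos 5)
Your proposal is correct and follows essentially the same route as the paper: delete one non-distinguished tooth (the paper takes $k=g$, shrinking $\check h_g$ to $0$ via the principle of harmonic measure, which strictly increases the measure while keeping $|T_n(1,\cdot)|=\cosh\frac n2 h_0$ fixed), and then rerun the deformation argument of Lemma \ref{lemma}, growing the distinguished tooth $h_0$ until the measure returns to $|E|$, which can only increase $|T_n(1,\cdot)|$. The only differences are cosmetic (arbitrary $k$ instead of $k=g$, and collapsing the tooth in one step rather than gradually).
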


\begin{proof}

Let $\Pi=\Pi(h_0,h_1,\cdots,h_g)$.
Using again the principle of harmonic measure with respect to the length of the slit $h_g$ for the set
$E_{\check{h}_g}$ associated to the comb
$${\Pi}_{\check{h}_g}=\Pi(h_0,h_1,\cdots,h_{g-1},{\check{h}_g}), \,\,{\check{h}_g}<{{h}_g},$$
we have that
$|{E}_{\check{h}_g}|>|E|$ and
$$|T_n (1, E)|= \cosh \frac n 2 h_0 = |T_n (1, {E}_{\check{h}_g})|.$$

In this way, decreasing  ${\check{h}_g}$, we can delete one of the gaps, i.e., to obtain   ${\check{h}_g}=0$ and, respectively, the set $\check{E}:=E_0$.
Thus, the number of gaps is now one less.

Repeating the proof of Lemma \ref{lemma} with respect to $\check{E}$,
we increase the value of $h$ in $\Pi(h_0+h,h_1,\cdots,h_{g-1})$. Then, finally, we obtain for some $h_*>0$
the comb
$\hat{\Pi}:=\Pi(h_0+h_*,h_1,\cdots,h_{g-1})$
and the set $\widehat{E}$
such that
$|\widehat{E}|=|E|$ and
$|T_n (1, E)|\le |T_n (1, \widehat{E})|$.



\end{proof}

\vskip 0.5cm

\begin{proof}[Proof of Theorem \ref{thmain}]
In light of Lemma \ref{lemma} the extremal configuration corresponds to a regular set $E$. In Lemma \ref{lemma2} we proved that
the extremal configuration corresponds to the case of a single gap,   i.e., to the set of the form
$$
E_s=\T\setminus (e^{-is/2},e^{is/2}),
$$
which corresponds to the comb
$
\Pi=\Pi(h_0).
$
Due to the extremality,  we can claim that for an arbitrary polynomial $P_n(\zeta)$ such that
$$
\left|\left\{ x\in[0,2\pi): |P_n(e^{ix})|\le 1\right\}\right|\ge 2\pi- s,
$$
we have
\begin{equation}\label{15sep5}
\sup_{\zeta\in \T}|P_n(\zeta)|\le |T_n(1,E_s)|=\cosh n\frac {h_0} 2.
\end{equation}
It remains to find the relation between the value $h_0$ and the length of the gap $s$.

In term of the variables
$$
 w(\zeta)=e^{i\theta(z)},\quad \zeta=e^{iz},
$$
we have a conformal mapping of the unit disk $\D$ on the domain
$$
\Omega=\D\setminus\{w=u: u\in (e^{-h_0},1)\}
$$
such that $w(0)=0$ and, by symmetry, $w(\zeta)$ is real for real $\zeta$. We point out that the preimage of the radial slit is the arc
$(e^{-is/2},e^{is/2})$.

By  a standard change of variables
\begin{equation}\label{16sep1}
\lambda=i\frac{1-\zeta}{1+\zeta}, \quad \mu(\lambda)=i\frac{1-w(\zeta)}{1+w(\zeta)},
\end{equation}
we pass to a conformal mapping of the upper half-plane $\C_+$ to the upper half plane with a cut along a single vertical interval
$$
\C_+\setminus\{\mu=i\eta: \eta\in(0,\kappa)\},
$$
where
\begin{equation}\label{15sep3}
\kappa=\frac{1-e^{-h_0}}{1+e^{-h_0}}=\tanh\frac {h_0}2.
\end{equation}
Respectively, the preimage of this vertical interval is the interval $(-\lambda_0,\lambda_0)\subset\BR$, where
\begin{equation}\label{15sep4}
\lambda_0=i\frac{1-e^{is/2}}{1+e^{is/2}}=\tan \frac s 4.
\end{equation}
It is well known (and easy to check directly) that this conformal mapping is of the form
$$
\mu(\lambda)=C\sqrt{\lambda^2-\lambda_0^2},\quad C>0.
$$
To find $C$, we use the normalization condition $\mu(i)=i$. We have
$$
\mu(\lambda)=\frac{\sqrt{\lambda^2-\lambda_0^2}}{\sqrt{1+\lambda_0^2}}
\quad\text{and particularly}\quad
\mu(0)=i\frac{\lambda_0}{\sqrt{1+\lambda_0^2}}=i\kappa.
$$
By \eqref{15sep3} and \eqref{15sep4} we have
$$
\frac{\tan \frac s 4}{\sqrt{1+\tan^2\frac s 4}}=\sin\frac s 4=\tanh \frac {h_0} 2,
$$
or $\cosh \frac {h_0} 2=\sec\frac s 4$. We substitute this value of $h_0$ in \eqref{15sep5}, as the result we obtain \eqref{15sep1}.

Further, since
$$
\mu^2+1=(\l^2+1)\cos^2 \frac s 4,
$$
using \eqref{16sep1}, we obtain
$$
\frac{4 w}{(1+w)^2}=
\frac{ 4 \z}{(1+\z)^2}\cos^2\frac s 4.
$$
Thus
$$
\frac{w^{1/2}+w^{-1/2}}2=\sec\frac s 4 \frac{\z^{1/2}+\z^{-1/2}}2=\sec \frac s 4\cos \frac z 2
$$
and, finally,
$$
T_n(\z,E_s)=e^{inz/2}\frac{(w^{1/2})^n+(w^{-1/2})^n} 2=e^{inz/2}\mathfrak{T}_n\left(\sec \frac s 4\cos \frac z 2\right).
$$
Due to the fixed normalizations $c_*=0$ and $T_n(1,E_s)>0$, this extremal polynomial is unique, see Remarks \ref{rem22a} and \ref{rem22}. Generally, we can choose an arbitrary normalization point $c_*=c_0\in [0,2\pi)$
and   multiply the extremal polynomial by a unimodular constant. Thus, we obtain \eqref{16sep3}.
\end{proof}

\vskip 0.4cm

\section*{Acknowledgment}

S. Tikhonov was partially supported  by  MTM 2017-87409-P,  2017 SGR 358, and
 the CERCA Programme of the Generalitat de Catalunya.
 P. Yuditskii was supported by the Austrian Science Fund FWF, project no:  P29363-N32.
 The authors would like to thank the organizers of the
IX Jaen Conference on Approximation Theory (Ubeda, Jaen, Spain), 
where
a part of this work was carried out.


\vskip 0.8cm


\begin{thebibliography}{77}

\bibitem{an}
V. Andrievskii,  A note on a Remez-type inequality for trigonometric polynomials,  J. Approx. Theory 116 (2002), 416--424.

\bibitem{an1}
V. Andrievskii,  S. Ruscheweyh,
Remez-type inequalities in the complex plane,
Constr. Approx. 25 (2007), 221--237.









\bibitem{bor}
P. Borwein, T. Erd\'{e}lyi, Polynomials and Polynomial Inequalities, Springer, New York, 1995.

\bibitem{brud}
A. Brudnyi, Yu. Brudnyi,  Local inequalities for multivariate polynomials and plurisubharmonic functions, Frontiers in interpolation and approximation,  Pure Appl. Math. (Boca Raton), 282, 17--32, 2007.

\bibitem{yom}
A.
Brudnyi, Y. Yomdin,
Norming sets and related Remez-type inequalities,
J. Aust. Math. Soc. 100 (2016),  163--181.

\bibitem{damanik}
D. Damanik, P. Yuditskii,
Counterexamples to the Kotani-Last conjecture for continuum Schr\"{o}dinger operators via character-automorphic Hardy spaces,
Adv. Math. 293 (2016), 738--781.



\bibitem{ahlfors}
B. Eichinger, P. Yuditskii,
 Ahlfors problem for polynomials,
Sb. Math. 209 (2018), 320--351; English translation from Mat. Sb. 209 (2018), 34--66.

\bibitem{er}
T. Erd\'{e}lyi, Remez-type inequalities on the size of generalized polynomials, J. London Math. Soc. (2) 45 (1992), 255--264.

\bibitem{er3}
T. Erd\'{e}lyi, Remez-type inequalities and their applications, J. Comput. Appl. Math.
47 (1993), 167--210.


\bibitem{er44}
T. Erd\'{e}lyi, The sharp Remez-type inequality for even trigonometric polynomials on the period,
 arXiv:1809.07466. 


\bibitem{er4}
T. Erd\'{e}lyi,  P. Nevai,
Lower bounds for derivatives of polynomials and Remez type inequalities,
Trans. Amer. Math. Soc. 349 (1997), 4953--4972.



\bibitem{yu2}
A.  Eremenko,  P.  Yuditskii,
Comb  functions, Recent advances in orthogonal polynomials, special functions, and their
applications, Contemp. Math., vol. 578, Amer. Math. Soc., Providence, RI,   99--11, 2012.



\bibitem{ga}
M. I. Ganzburg,  Polynomial inequalities on measurable sets and their applications, Constr. Approx. 17 (2001), 275--306.

\bibitem{gunz}
M. I. Ganzburg,  On a Remez-type inequality for trigonometric polynomials,
J. Approx. Theory 164 (2012),  1233--1237.


\bibitem{kroo}
A. Kro\'{o},  On Remez-type inequalities for polynomials in $\mathbb{R}^m$ and $\C^m$,  Anal. Math.  27  (2001),   55--70.

\bibitem{levin}
B. Ya. Levin,  The connection of a majorant with a conformal mapping. II.
 Classification of closed sets on $\mathbb{R}$ and representation of a majorant. III.
Teor. Funktsii Funktsional. Anal. i Prilozhen.  52 (1989), 3--33; English translation in J. Soviet Math. 52 (1990),  3351--3372.



\bibitem{mar}
V.
Marchenko,
Sturm-Liouville Operators and Applications,
Providence,  2011.



\bibitem{nazarov}
F. L. Nazarov,  Local estimates for exponential polynomials and their applications to inequalities of the uncertainty principle type,
Algebra i Analiz 5 (1993),  3--66; English translation in St. Petersburg Math. J. 5 (1994),  663--717.


\bibitem{nev} R. Nevanlinna, Analytic Functions, Springer-Verlag, Berlin, Heidelberg, 1970.

\bibitem{nurs}
E.~Nursultanov, S.~Tikhonov, {A sharp Remez inequality for trigonometric polynomials}, Constructive Approx. 38  (2013), 101--132.


\bibitem{pomm}
Ch. Pommerenke,  Univalent Functions, Vandenhoeck $\&$ Ruprecht,  1975.

\bibitem{re}
E. Remes, Sur une propri\'{e}t\'{e} extremale des polyn\^{o}mes de Tchebychef, 
Commun. Inst. Sci. Math. et Mecan., Univ. Kharkoff et Soc. Math. Kharkoff, IV. Ser. 13 (1936), 93--95.
Online: http://www.math.technion.ac.il/hat/fpapers/remezppr.pdf




\bibitem{yu3}
M. Sodin,  P.  Yuditskii,
 Functions that deviate least from zero on closed subsets of the real axis, Algebra i Analiz 4 (1992),  1--61;
English translation in St. Petersburg Math. J. 4 (1993),  201--249.

\bibitem{stul} S. B. Stechkin, P. L. Ulyanov, Sequences of convergence for series, Trudy Mat. Inst. Steklov 86 (1965), 3--83;
English translation in Proc. Steklov Inst. Math. (1967), 1--85.


\bibitem{totik}
 V. Totik,
 Metric properties of harmonic measures, Mem. Amer. Math. Soc. 184 (2006), no. 867, 163 pp.

\bibitem{turan}
P. Tur\'{a}n, Eine Neue Methode in der Analyses und Deren Anwendungen, Acad. Kiad\'{o}, Budapest, 1953.
\end{thebibliography}
\end{document}